\newtheorem{thm}{Theorem}[section]
\newtheorem{cor}[thm]{Corollary}
\newtheorem{lem}[thm]{Lemma}
\newtheorem{prop}[thm]{Proposition}
\theoremstyle{mydefinition}
\newtheorem{dfn}[thm]{Definition}
\theoremstyle{myremark}
\newtheorem{rem}[thm]{Remark}
\newtheorem{exa}[thm]{Example}
\newtheorem{prob}[thm]{Problem}
\newtheorem{alg}[thm]{Algorithm}
\newcommand{\C}{{\mathbb{C}}}
\newcommand{\Rv}[1]{{#1}^\looparrowleft}
\newcommand{\R}{{\mathbb{R}}}
\title{An $O(n\log^2n)$ Algorithm for Computing Hankel Determinants up to Order $n$}
\author{Feihu Liu$^{\color{blue} \dag}$, Guoce Xin$^{\color{blue} \ddag}$, and Zihao Zhang$^{\color{blue} \S}$
\\[2mm]
{\small $^{\color{blue} \dag}$ Center for Combinatorics, LPMC}\\[-0.8ex]
{\small Nankai University, Tianjin 300071, P.R.~China}\\
{\small $^{\color{blue} \ddag}$ School of Mathematical Sciences,}\\[-0.8ex]
{\small Capital Normal University, Beijing, 100048, P.R.~China}\\
{\small $^{\color{blue} \S}$ School of Mathematics and Statistics}\\[-0.8ex]
{\small Beijing Institute of Technology, Beijing 102400, P.R.~China}\\
{\small {\color{blue} $^\dag$} Email address: liufeihu7476@163.com}\\
{\small {\color{blue} $^\ddag$} Email address: guoce\_xin@163.com}\\
{\small {\color{blue} $^\S$} Email address: zihao-zhang@foxmail.com}
}
\date{\today}
\begin{document}

\maketitle

\begin{abstract}  
Given the rational power series $h(x) = \sum_{i \geq 0} h_i x^i \in \C[[x]]$,  the Hankel determinant of order $n$ 
is defined as $H_n(h(x)) := \det (h_{i+j})_{1 \leq i,j \leq n}$.
We explore the relationship between the Hankel continued fraction and the generalized Sturm sequence. This connection inspires the development of a novel algorithm for computing the Hankel determinants $\{H_i(h(x))\}_{i=1}^{n}$ using $O(n \log^2 n)$ arithmetic operations.
We also explore the connection between the generalized Sturm sequences and the signature of Hankel matrices.
\end{abstract}

\noindent
\begin{small}
 \emph{Mathematics subject classification}: Primary 15B05; Secondary 05B20, 11A55, 11Y65, 15A15.
\end{small}

\noindent
\begin{small}
\emph{Keywords}: Hankel determinant; Continued fraction; Sturm sequence; Real-rootedness.
\end{small}

\section{Introduction}
Let $F=(h_0,h_1,h_2,\ldots)$ be a sequence with $h_i\in \C$, and denote by 
\begin{align*}
F(x)=h_0+h_1x+h_2x^2+\cdots \in \C[[x]]
\end{align*}
its generating function. 
Define the \emph{Hankel matrices} of order $n$ of $F(x)$ by 
\begin{align*}
\mathcal{H}_n(F(x))= (h_{i+j})_{0\leq i,j\leq n-1}
=\left ( \begin{matrix}
h_0 & h_1 & \cdots & h_{n-1} \\
h_1 & h_2 & \cdots & h_{n}  \\
\vdots & \vdots & \ddots & \vdots \\ 
h_{n-1} & h_{n} & \cdots & h_{2n-2} \\ 
\end{matrix} \right ).
\end{align*}
or abbreviated as $\mathcal{H}_n(F)$. 
The \emph{Hankel determinant} of order $n$ of $F(x)$ is defined by $H_n(F)=\det \mathcal{H}_n(F(x))$.
% and the Hankel transfromof $F(x)$ is defined by $(H_1(F),H_2(F),\ldots)$. The defined $n$-Hankel transform is $(H_1(F),H_2(F),\ldots,H_n)$

The computation of Hankel determinants is a well-studied topic in combinatorics and computer algebra. For instance, Hankel determinants for specific sequences have been explored in several works \cite{Krattenthaler05, chien2022hankel, cigler2011some, Sulanke-Xin}, among others. Quadratic forms associated with Hankel matrices offer a means for solving real root counting problems and real root localization problems; see \cite{parrilo2019algebraic}. Furthermore, certain properties of sequences of Hankel determinants provide the theoretical foundation upon which both Koenig's method and the Rutishauser \texttt{qd} algorithm, for the approximation of zeros and poles of meromorphic functions, are based; see \cite{gutknecht2011qd,householder1970numerical}.

In the context of algorithms, for a Hankel matrix of order \( n \), Sendra and Llovet \cite{Sendra93} have presented a method to compute Hankel determinants in \( O(n^2) \) arithmetic operations. For a more comprehensive overview of algorithms related to Hankel matrices, we refer the reader to the survey by Heinig and Rost \cite{HeinigRost}.
Kravanja and Van Barel \cite{BK2000} present a generically \( O(n \log^2 n) \) algorithm for computing the determinant of a nonsingular complex Hankel matrix whose size is a power of $2$. 
 In \cite{kravanja2000coupled, dietrich1996nlog2n}, there are two  \( O(n \log^2 n) \) algorithms for computing the determinants of nonsingular Hankel matrices, which leverage fast Fourier transform techniques and efficient interpolation methods.

These aforementioned algorithms appear to be designed for order \( n \). The continued fraction method is an essential tool for evaluating Hankel determinants and is particularly effective 
for calculating Hankel determinants starting from order $1$ up to order $n$. In this context, we utilize the following significant continued fraction, which was introduced by Han \cite{HanGuoNiu}.

\begin{dfn}\emph{\cite{HanGuoNiu}} 
For any positive integer $\delta$, a \emph{super continued fraction} associated with $\delta$, termed a \emph{super $\delta$-fraction}, is defined as
\begin{align}\label{eq_H-f}
F(x)=\cfrac{v_0x^{k_0}}{1+u_{1}(x)x
-\cfrac{v_1x^{k_0+k_1+\delta}}{1+u_{2}(x)x
-\cfrac{v_2x^{k_1+k_2+\delta}}{1+u_3(x)x-\ddots}}},
\end{align}
where $v_j \neq 0$ are constants, $k_j$ are nonnegative integers, and $u_j(x)$ are polynomials with degree at most $k_{j-1}+\delta-2$.
\end{dfn}

 Special cases of the super \( \delta \)-fraction, which have determinants formulas include the following: 
when all \( k_j = 0 \) and \( \delta = 1\) (or \( \delta = 2 \)),
it is the \emph{S-continued fraction} (or \emph{J-continued fraction}) \cite{Krattenthaler05,FlajoletDM}, and Hankel determinants of \( F(x) \) can be computed  if these determinants are nonzero for all \( n \);
when \( \delta = 1 \) and \( u_j(x) = 0 \), it becomes a \emph{C-continued fraction} and has determinants formula \cite{Cigler-C13, PaulBarry}; When \( \delta = 2 \), it is the \emph{H-continued fraction}\cite{HanGuoNiu}, which also relates to the S-X continued fraction methods \cite{Sulanke-Xin} and   
the Hankel determinants can be evaluated by following theorem whose existence and uniqueness are guaranteed without any condition.

\begin{thm}\emph{\cite[Theorem 2.2]{HanGuoNiu}}\label{HanGuoNiu-HankelCF}
Let $F(x)$ be a power series such that its H-continued fraction is given by \eqref{eq_H-f} with $\delta =2$. 
Then, all non-vanishing Hankel determinants of $F(x)$ are given by
$$H_{s_j}(F) = (-1)^{\epsilon} v_0^{s_j} v_1^{s_j -s_1} v_2^{s_j -s_2} \cdots v_{j-1}^{s_j -s_{j-1}},$$
where $\epsilon = \sum_{i=0}^{j-1} k_i (k_i +1) /2$  and $s_j = k_0 + k_1+ \cdots + k_{j-1} +j$ for every $j \geq 0$.   
\end{thm}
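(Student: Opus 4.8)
The plan is to prove the formula by induction on $j$ (the inductive hypothesis being quantified over all super $2$-fractions), peeling off the top level of \eqref{eq_H-f} at each step. Denote by
\[
F_1(x)=\cfrac{v_1x^{k_1}}{1+u_2(x)x-\cfrac{v_2x^{k_1+k_2+2}}{1+u_3(x)x-\ddots}}
\]
the tail, which is again a super $2$-fraction; then $F(x)=\dfrac{v_0x^{k_0}}{1+u_1(x)x-x^{k_0+2}F_1(x)}$. Writing $s'_i=k_1+\cdots+k_i+i$ for the analogue of $s_i$ attached to $F_1$, one has the bookkeeping identities $s_j=k_0+1+s'_{j-1}$, $s_j-s_\ell=s'_{j-1}-s'_{\ell-1}$ for $1\le\ell\le j-1$, and $\epsilon=\tfrac{k_0(k_0+1)}{2}+\epsilon'$, where $\epsilon'=\sum_{\ell\ge1}k_\ell(k_\ell+1)/2$. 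Granting the inductive hypothesis for $F_1$, the theorem reduces to the following statement. \textbf{Peeling Lemma.} For $F(x)=\dfrac{v_0x^{k_0}}{1+u_1(x)x-x^{k_0+2}F_1(x)}$ one has $H_n(F)=0$ for $1\le n\le k_0$, and $H_n(F)=(-1)^{k_0(k_0+1)/2}v_0^{\,n}H_{n-k_0-1}(F_1)$ for $n\ge k_0+1$; in particular $H_n(F)\ne0$ precisely when $n=0$ or $n-k_0-1\in\{s'_i\}_{i\ge0}$, which by the first identity above is exactly $n\in\{s_j\}_{j\ge0}$.

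To prove the lemma, first note $F(x)=v_0x^{k_0}\widetilde F(x)$ with $\widetilde F(0)=1$, so the $(i,j)$ entry $h_{i+j}$ of $\mathcal{H}_n(F)$ vanishes whenever $i+j\le k_0-1$. If $1\le n\le k_0$ this forces rows $0,1,\dots,k_0-n$ to be identically zero, hence $H_n(F)=0$. If $n\ge k_0+1$, split
\[
\mathcal{H}_n(F)=\begin{pmatrix}A&B\\ B^{\mathsf T}&C\end{pmatrix},\qquad A=(h_{i+j})_{0\le i,j\le k_0}.
\]
Since $h_m=0$ for $m<k_0$ and $h_{k_0}=v_0$, the block $A$ is anti-triangular with the constant $v_0$ along its main anti-diagonal; the only surviving term in its Leibniz expansion comes from the order-reversing permutation on $k_0+1$ symbols, so $A$ is invertible with $\det A=(-1)^{k_0(k_0+1)/2}v_0^{\,k_0+1}$. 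Therefore
\[
H_n(F)=\det A\cdot\det\bigl(C-B^{\mathsf T}A^{-1}B\bigr)=(-1)^{k_0(k_0+1)/2}v_0^{\,k_0+1}\det S,\qquad S:=C-B^{\mathsf T}A^{-1}B .
\]

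The heart of the matter — and the step I expect to be the main obstacle — is identifying the Schur complement: the claim is $\det S=v_0^{\,n-k_0-1}H_{n-k_0-1}(F_1)$. The input is the functional equation in the form $F(x)\bigl(1+u_1(x)x\bigr)=v_0x^{k_0}+x^{k_0+2}F(x)F_1(x)$. Comparing coefficients, convolving the coefficient sequence of $F$ against the polynomial $1+u_1(x)x$ is a finite unitriangular Toeplitz operation whose band width $\le k_0+1$ matches the size of the block $A$, so the matching unitriangular row and column operations on $\mathcal{H}_n(F)$ use $A$ to clear $B$ and $B^{\mathsf T}$; and, because the equation sends the coefficients of $F$ of index $\ge k_0+2$ to $v_0$ times the coefficients of $F_1$ shifted by $k_0+2$, they carry $C$ onto $v_0$ times a matrix unipotently congruent to $\mathcal{H}_{n-k_0-1}(F_1)$ (a genuine Hankel matrix, since $F_1$ vanishes to order $k_1$). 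Taking determinants yields $\det S=v_0^{\,n-k_0-1}H_{n-k_0-1}(F_1)$, and the Peeling Lemma follows. Tracking the shift $k_0+2$ together with the cancellation produced by $u_1$ — and checking that the degree bound $\deg u_1\le k_0+\delta-2=k_0$ is exactly what makes this clearing clean — is the only laborious point; everything else is formal.

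Finally, the induction on $j$: the base case $s_0=0$ is the empty product $H_0(F)=1$, and for the inductive step, combining the Peeling Lemma with the inductive value $H_{s'_{j-1}}(F_1)=(-1)^{\epsilon'}v_1^{\,s'_{j-1}}v_2^{\,s'_{j-1}-s'_1}\cdots v_{j-1}^{\,s'_{j-1}-s'_{j-2}}$ and the bookkeeping identities above gives
\[
H_{s_j}(F)=(-1)^{k_0(k_0+1)/2}v_0^{\,s_j}H_{s'_{j-1}}(F_1)=(-1)^{\epsilon}v_0^{\,s_j}v_1^{\,s_j-s_1}\cdots v_{j-1}^{\,s_j-s_{j-1}},
\]
while the Peeling Lemma simultaneously certifies that every Hankel determinant whose index is not of the form $s_j$ vanishes. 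If one prefers not to quote the existence and uniqueness of the expansion \eqref{eq_H-f}, these can be produced in parallel by running the same peeling recursion directly on the power series side.
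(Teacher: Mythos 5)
First, a remark on the comparison: the paper does not prove this statement at all — it is quoted from Han \cite[Theorem 2.2]{HanGuoNiu} — so there is no internal proof to match against; your plan (peel off the top level of the fraction, prove a contraction/peeling lemma, and induct with the bookkeeping identities $s_j=k_0+1+s'_{j-1}$, $s_j-s_\ell=s'_{j-1}-s'_{\ell-1}$, $\epsilon=\tfrac{k_0(k_0+1)}{2}+\epsilon'$) is exactly the standard route to this theorem, and your bookkeeping, the vanishing of $H_n(F)$ for $1\le n\le k_0$, the evaluation $\det A=(-1)^{k_0(k_0+1)/2}v_0^{k_0+1}$, and the final induction are all correct, as is the statement of the Peeling Lemma itself.

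The genuine gap is where you yourself locate the heart of the matter: the identity $\det S=v_0^{\,n-k_0-1}H_{n-k_0-1}(F_1)$ for the Schur complement $S=C-B^{\mathsf T}A^{-1}B$. The mechanism you sketch does not do what you claim. The row/column operations encoded by convolution with $1+xu_1(x)$ (i.e.\ multiplication by the banded unipotent Toeplitz matrix, applied to rows and columns of index $\ge k_0+1$) do \emph{not} clear $B$ and $B^{\mathsf T}$: by the functional equation $F(x)\bigl(1+xu_1(x)\bigr)=v_0x^{k_0}+x^{k_0+2}F(x)F_1(x)$, the $(r,c)$ entry with $c\ge k_0+1$ becomes the coefficient of $x^{r+c-k_0-2}$ in $F(x)F_1(x)$, and since $FF_1$ has valuation $k_0+k_1$ these entries are generically nonzero throughout the block $B$; clearing $B$ is precisely the Schur-complement operation $-B^{\mathsf T}A^{-1}$, which involves the anti-triangular $A^{-1}$ and is not a banded Toeplitz convolution, so the two sets of operations cannot be conflated. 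Moreover, applying the convolution to both the rows and the columns of index $\ge k_0+1$ carries $C$ onto the Hankel matrix of $(FF_1)(1+xu_1)$ reindexed, i.e.\ onto $\mathcal{H}_{n-k_0-1}\bigl(v_0F_1+x^2FF_1^2\bigr)$, not onto $v_0$ times a matrix unipotently congruent to $\mathcal{H}_{n-k_0-1}(F_1)$; the parasitic term $x^2FF_1^2$ must be cancelled exactly by $B^{\mathsf T}A^{-1}B$, and proving that cancellation (or replacing it by a genuine triangular factorization of $\mathcal{H}_n(F)$ in terms of $\mathcal{H}_{n-k_0-1}(F_1)$) is the entire content of the lemma, which as written is asserted rather than proved. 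Two smaller slips in the same passage: the band width of multiplication by $1+xu_1(x)$ is $k_0+2$ (its degree can be $k_0+1$), not $k_0+1$; and the functional equation produces the coefficients of $FF_1$, not ``$v_0$ times the coefficients of $F_1$ shifted by $k_0+2$'' — that is only the leading-order behaviour, with shift $2k_0+2$. So your reduction of the theorem to the Peeling Lemma is sound and the lemma is true, but the key step of the lemma still needs an actual proof.
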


Utilizing the algorithm presented in \cite{sokal2023simple}, if $F(x)$ represents a power series, the $\delta$-continued fraction of $F(x) \bmod x^{2n-1}$ can be calculated with $O(n^2)$ field operations, subsequently yielding the Hankel determinants.
In this paper, we examine the relationship between the H-continued fraction and the generalized Sturm sequence, which yields a novel method for obtaining the H-continued fraction.
This inspires us to provide a new $O(n \log^2 n)$ algorithm \texttt{CompHD} for 
computing the  Hankel determinants $\{H_i(h(x))\}_{i=0}^{n-1}$.
It is worth mentioning that we do not require the non-singular condition as in the previous algorithms.

Our algorithm rely on the half-Greatest Common Divisor (half-GCD) algorithm 
which is an efficient method for computing the greatest common divisor
of two polynomials, see detail in \cite[Section 11]{ModernComputerA}. Comparing to the traditional GCD algorithm,
it employs a divide-and-conquer strategy to quickly calculate the all quotients without computing all the remainders.
By splitting the input polynomials into high and low-degree parts, the algorithm recursively computes the quotients of the high-degree parts and uses those quotients to adjust the low-degree parts. Te complexity of this algorithm is $O(n \log^2 n)$.

%For the reader's convenience, we have implemented Algorithm \texttt{CompHD} \ref{Algorithm-CHD} using \texttt{Maple} \cite{Maple},  available at the \href{***}{website}.

This paper is organized as follows.
In Section \ref{sec_Hfrac}, we present the connection between H-continued fraction and Sturm sequence.
In Section \ref{Section-33}, we present Algorithm \texttt{CompHD} for computing the Hankel determinants and give a formula for the signature of the nonsingular Hankel matrix.

\section{Hankel continued fraction and generalized Sturm sequence} \label{sec_Hfrac}

\begin{dfn} \label{def-generalizedSturmsequence}
Consider $f_0(x)$ and $f_1(x)$ as polynomials where $\deg(f_0) > \deg(f_1)$.
 The \emph{generalized Sturm sequence} $\{f_i(x)\}_{i=0}^{s+1}$ where $f_{s+1}(x) = 0$ is constructed as follows:
 We define the sequence by
\[
f_{i+2}(x) = - \emph{\texttt{rem}}(f_i(x), f_{i+1}(x)).
\]
where $\emph{\texttt{rem}}(f_i(x),f_{i+1}(x))$ denotes the remainder
obtained by dividing $f_i(x)$ by $f_{i+1}(x)$.

 We assume that the sequence $\{f_i(x)\}_{i=0}^{s+1}$ satisfies the relations:
\[ f_i(x) = B_i(x)f_{i+1}(x) - f_{i+2}(x), \quad 0 \leq i \leq s-1. \]
Furthermore, we denote $m_i = \deg(B_i)$ and $b_i$ represents the leading coefficient of $B_i(x)$.
Note that when  $f_1(x) = f'(x)$, it is called the (traditional) \emph{Sturm sequence} of $f(x)$. 
\end{dfn}

It is evident that 
\begin{align}\label{eq_degB}
\deg(f_i) = \deg(B_i) + \deg(f_{i+1}) \quad \text{and} \quad \deg(B_i) \geq 1.
\end{align}

Given a polynomial $f(x)$, we denote the transformation $\Rv f(x) = x^{\deg (f)} \cdot f(x^{-1})$, where ``$\Rv{} $" signifies the reversal of the polynomial.  For instance, if $f(x) = 3x^2 + 4x^3 + 6x^5$, then $\Rv f(x) = 6 + 4x^2 + 3x^3$.
Note that $\deg (\Rv f) = \deg(f) - \operatorname{ldeg} (f)$, where $\operatorname{ldeg}(f)$ denotes the lowest degree of the polynomial $f(x)$.

Thus, for $0\leq i\leq s-2$, we can verify that
\begin{align}\label{eq_g3}
\Rv f_i(x) = \Rv B_i(x) \cdot \Rv f_{i+1}(x) - x^{\deg(f_i) - \deg(f_{i+2})} \cdot \Rv f_{i+2}(x).
\end{align}
For $0\leq i\leq s-2$, by \eqref{eq_degB}, we have
\begin{align*}
\deg(f_i) - \deg(f_{i+2}) &= \deg(B_i) + \deg(f_{i+1}) - \deg(f_{i+2}) \\
&= \deg(B_i) + \deg(B_{i+1})\\
&=m_i+m_{i+1}.
\end{align*}
At the terminal step, since $f_{s+1}(x)=0$, we have
\[
f_{s-1}(x)=B_{s-1}(x)f_s(x),
\]
and hence
\[
\Rv f_{s-1}(x)=\Rv B_{s-1}(x)\Rv f_s(x).
\]
Rewriting these identities yields the following lemma.

\begin{lem} \label{lem_gcf}
Following the notations in Definition \ref{def-generalizedSturmsequence}. For $0\leq i\leq s-2$, we have
\begin{align}\label{Basic-Contin-Fract}
\frac{x^{m_i-1} \Rv f_{i+1}(x)}{\Rv f_i(x)}=
\cfrac{x^{m_i-1}}{\Rv B_i(x) - \cfrac{x^{(m_i-1)+(m_{i+1}-1)+2} \Rv f_{i+2}(x)}{\Rv f_{i+1}(x)}}.
\end{align}
For the terminal index $i=s-1$, we have
\[
\frac{x^{m_{s-1}-1}\Rv f_s(x)}{\Rv f_{s-1}(x)}
=
\frac{x^{m_{s-1}-1}}{\Rv B_{s-1}(x)}.
\]
Moreover, $\deg(\Rv B_i)\leq m_i$ for all $0\leq i\leq s-1$, and the constant term of $\Rv B_i(x)$ is nonzero for all $0\leq i\leq s-1$.
\end{lem}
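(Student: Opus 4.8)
The plan is to derive \eqref{Basic-Contin-Fract} directly from \eqref{eq_g3} by dividing through by $\Rv f_{i+1}(x)$ and rearranging. First I would take the relation \eqref{eq_g3}, namely $\Rv f_i(x) = \Rv B_i(x) \cdot \Rv f_{i+1}(x) - x^{\deg(f_i) - \deg(f_{i+2})} \cdot \Rv f_{i+2}(x)$, and divide both sides by $\Rv f_{i+1}(x)$ to obtain
\[
\frac{\Rv f_i(x)}{\Rv f_{i+1}(x)} = \Rv B_i(x) - \frac{x^{\deg(f_i)-\deg(f_{i+2})}\,\Rv f_{i+2}(x)}{\Rv f_{i+1}(x)}.
\]
Next I would substitute the degree identity already computed in the excerpt, $\deg(f_i)-\deg(f_{i+2}) = m_i + m_{i+1} = (m_i-1)+(m_{i+1}-1)+2$, into the exponent on the right-hand side, so that the exponent matches the one appearing in \eqref{Basic-Contin-Fract}. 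Then I would take the reciprocal of both sides and multiply numerator and denominator by $x^{m_i-1}$; this turns $\Rv f_{i+1}(x)/\Rv f_i(x)$ into $x^{m_i-1}\Rv f_{i+1}(x)/(x^{m_i-1}\Rv f_i(x))$ on the left and produces exactly the claimed continued-fraction step on the right, establishing \eqref{Basic-Contin-Fract}.

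For the two side remarks, I would argue as follows. The bound $\deg(\Rv B_i) \le m_i$ is immediate: by definition $\Rv B_i(x) = x^{\deg(B_i)} B_i(x^{-1}) = x^{m_i} B_i(x^{-1})$, and since $B_i$ has degree $m_i$ its reversal is a genuine polynomial of degree at most $m_i$ (with equality unless $B_i$ has zero constant term). For the claim that the constant term of $\Rv B_i(x)$ is nonzero, I would note that the constant term of $\Rv B_i$ equals the leading coefficient $b_i$ of $B_i$, which is nonzero by the definition of $B_i$ as the quotient polynomial of degree exactly $m_i$ in the Euclidean step $f_i = B_i f_{i+1} - f_{i+2}$; here one uses $\deg(f_i) > \deg(f_{i+1})$ together with \eqref{eq_degB} to guarantee $m_i \ge 1$, so $B_i$ is nonconstant and $b_i \ne 0$ is exactly its top coefficient.

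I do not expect any genuine obstacle here: the lemma is a formal rewriting of \eqref{eq_g3}, and every ingredient (the reversal identity \eqref{eq_g3}, the degree computation, and the fact that $m_i \ge 1$) is already in hand from the preceding paragraphs. The only point requiring a word of care is bookkeeping the exponent of $x$ so that the factor $x^{m_i-1}$ is correctly distributed between the left-hand fraction and the inner fraction on the right; writing $\deg(f_i)-\deg(f_{i+2})$ as $(m_i-1)+(m_{i+1}-1)+2$ rather than $m_i+m_{i+1}$ is the bookkeeping device that makes the recursion telescope cleanly when these local steps are later composed into the full H-continued fraction.
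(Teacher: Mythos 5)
Your proposal is correct and is essentially the paper's own argument: the paper obtains the lemma precisely by the degree computation $\deg(f_i)-\deg(f_{i+2}) = m_i + m_{i+1}$ followed by the statement that ``rewriting \eqref{eq_g3} yields the following lemma,'' which is exactly your divide-and-invert manipulation. Your justification of the two side remarks (degree of $\Rv B_i$ at most $m_i$, constant term of $\Rv B_i$ equal to $b_i \neq 0$) is also the intended one.
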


Lemma \ref{lem_gcf} precisely establishes the H-continued fraction representation of
\[
S_{f_0,f_1}(x):=x^{m_0-1}\frac{\Rv f_1(x)}{\Rv f_0(x)},\qquad
m_0=\deg(f_0)-\deg(f_1).
\]
We denote this expression by $S_{f_0,f_1}(x)$ for brevity in the subsequent discussion. We now state the main result of this section.

\begin{thm}\label{thm-General-Case-H}
Following the notations in Definition \ref{def-generalizedSturmsequence}.
If $r=m_0+m_1+\cdots+m_{\kappa-1}$ for a certain $1\leq \kappa\leq s$, then we have
\begin{align*}
  H_r\left(S_{f_0,f_1}(x) \right)=(-1)^{{\sum_{i=0}^{\kappa-1} \frac{m_i(m_i-1)}{2}}}
\frac{1}{\prod_{i=0}^{\kappa-2}b_{i}^{2(r-\sum_{j=0}^{i-1}m_j)-m_i}\cdot b_{\kappa-1}^{m_{\kappa-1}}}.
\end{align*}
Otherwise, 
$ H_r\left(S_{f_0,f_1}(x)\right)=0.$
\end{thm}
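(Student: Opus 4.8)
The plan is to combine Lemma~\ref{lem_gcf} with the Hankel determinant formula for H-continued fractions, Theorem~\ref{HanGuoNiu-HankelCF}. First I would observe that iterating Lemma~\ref{lem_gcf} along the generalized Sturm sequence $\{f_i\}$ expresses $S_{f_0,f_1}(x)$ as an explicit super $\delta$-fraction with $\delta=2$; indeed, the identity \eqref{Basic-Contin-Fract} has exactly the shape of one level of \eqref{eq_H-f}, with the partial numerator exponents $(m_i-1)+(m_{i+1}-1)+2 = m_i+m_{i+1}$ matching the pattern $k_{i-1}+k_i+\delta$ once we set $k_i = m_i - 1$ (note $k_i\ge 0$ by \eqref{eq_degB}), and with the denominator polynomial $\Rv B_i(x)$ playing the role of $1+u_{i+1}(x)x$ up to the normalization by its nonzero constant term. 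So the first real step is the bookkeeping: normalize each $\Rv B_i$ by its constant term $c_i := \Rv B_i(0)$ (which is the leading coefficient $b_i$ of $B_i$, since reversal sends the leading coefficient to the constant term), pull these constants out of the continued fraction, and read off the parameters $v_j$ and $k_j$ of the resulting super $2$-fraction in terms of the $b_i$ and $m_i$.

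Once the super $2$-fraction data is identified, I would apply Theorem~\ref{HanGuoNiu-HankelCF} directly. With $k_i = m_i-1$, the index $s_j = k_0+\cdots+k_{j-1}+j = m_0+\cdots+m_{j-1}$, so the non-vanishing Hankel determinants occur exactly at orders $r$ of the form $m_0+\cdots+m_{\kappa-1}$, which matches the statement; for all other $r$ the determinant vanishes. The sign is $\epsilon = \sum_{i=0}^{\kappa-1} k_i(k_i+1)/2 = \sum_{i=0}^{\kappa-1} (m_i-1)m_i/2$, again matching. The magnitude $v_0^{s_\kappa} v_1^{s_\kappa - s_1}\cdots v_{\kappa-1}^{s_\kappa - s_{\kappa-1}}$ then has to be shown to equal the reciprocal product $1/\bigl(\prod_{i=0}^{\kappa-2} b_i^{2(r-\sum_{j=0}^{i-1}m_j)-m_i}\cdot b_{\kappa-1}^{m_{\kappa-1}}\bigr)$. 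This requires tracking how the constants $c_i = b_i$ enter the $v_j$. Each time $\Rv B_i$ is divided by $c_i$ to make its constant term $1$, a factor is introduced both into the numerator $v_i x^{\,\cdot}$ sitting above level $i$ and, after clearing, into the numerator one level down; carefully accounting for these contributions (together with the leading factor $v_0$ coming from the top of the fraction and the exponents of $x$) should yield $v_j$ as an explicit monomial in $b_0,\dots,b_j$, and substituting into the product telescopes to the claimed expression.

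The main obstacle I anticipate is precisely this last algebraic reconciliation: getting the exponents $2(r - \sum_{j=0}^{i-1} m_j) - m_i$ on $b_i$ to come out correctly. The exponent $s_\kappa - s_i = m_i + m_{i+1} + \cdots + m_{\kappa-1} = r - \sum_{j=0}^{i-1} m_j - m_i + \cdots$, hmm — more carefully, $s_\kappa - s_i = r - (m_0 + \cdots + m_{i-1})$, and each $b_i$ will show up in $v_i$ raised to some small fixed power (I expect power $2$, coming from the two places the normalization constant propagates) and possibly in $v_{i+1}$ to power $-1$ or similar, so the total exponent in the product is a linear combination of the $s_\kappa - s_j$'s; the bulk of the work is verifying these small local exponents via a clean statement of how one ``normalization and clearing'' step transforms a truncated super $\delta$-fraction. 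I would isolate that as a self-contained sub-lemma — something like: if $G(x) = \tfrac{x^{m-1}}{c\,B(x)/c - x^{m+m'}G'(x)}$ with $B(0)=c$ relates levels, then the super-fraction data $(v,k)$ of $G$ and $G'$ differ by an explicit factor of $c$ — prove that once, and then the theorem follows by induction on $\kappa$ together with Theorem~\ref{HanGuoNiu-HankelCF}. The only other point needing care is the boundary case $i = \kappa-1$, where the exponent is $m_{\kappa-1}$ rather than following the generic $2(\cdot)-m_i$ pattern; this should drop out naturally because $v_{\kappa-1}$ appears with exponent $s_\kappa - s_{\kappa-1} = m_{\kappa-1}$, the smallest in the list, so no further normalization below it contributes.
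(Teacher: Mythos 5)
Your proposal is correct and takes essentially the same route as the paper: iterate Lemma \ref{lem_gcf}, normalize each $\Rv B_i$ by its constant term $b_i$ so that $S_{f_0,f_1}(x)$ becomes an H-fraction with $k_i=m_i-1$, $v_0=b_0^{-1}$, $v_i=(b_{i-1}b_i)^{-1}$, and then apply Theorem \ref{HanGuoNiu-HankelCF}. The bookkeeping you anticipate works out exactly as you predict: each $b_i$ appears to the power $-1$ in both $v_i$ and $v_{i+1}$, so its total exponent is $-(s_\kappa-s_i)-(s_\kappa-s_{i+1})=-\bigl(2(r-\sum_{j<i}m_j)-m_i\bigr)$ for $i\le\kappa-2$ and $-m_{\kappa-1}$ at the boundary, giving the stated formula.
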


\begin{proof}
Using  Lemma \ref{lem_gcf} recursively, we have the H-continued fraction of $S_{f_0,f_1}(x)$:
\begin{align}\label{eq_Hcf_Pf}
S_{f_0,f_1}(x)=\cfrac{ b_0^{-1} x^{m_0-1}}{(\Rv B_0(x)/b_0) 
-\cfrac{ (b_0 b_1)^{-1} x^{(m_0-1)+ (m_1-1) +2}}{(\Rv B_1(x)/b_1)
-\cfrac{ (b_1 b_2)^{-1}x^{(m_1 -1) + (m_2-1) +2}}{(\Rv B_2(x)/b_2) -\ddots}}}.
\end{align}
According to Theorem \ref{HanGuoNiu-HankelCF},  we have two cases.
If there is no $\kappa$ such that $r=m_0+m_1+\cdots +m_{\kappa-1}$, then 
$$H_r\left(S_{f_0,f_1}(x) \right)=0.$$
If $r=m_0+m_1+\cdots+m_{\kappa-1}$ for a certain $\kappa$, then we have
\begin{align*}
H_r\left(S_{f_0,f_1}(x)\right)&=(-1)^\epsilon
\frac{1}{(b_{0})^r \ (b_{0}b_{1})^{r-m_0} \cdots (b_{\kappa-2}b_{\kappa-1})^{r-\sum_{i=0}^{\kappa-2}m_i} }\\
&=(-1)^\epsilon
\frac{1}{\prod_{i=0}^{\kappa-2}b_{i}^{2(r-\sum_{j=0}^{i-1}m_j)-m_i}\cdot b_{\kappa-1}^{r-\sum_{i=0}^{\kappa-2}m_i}} ,
\end{align*}
where $\epsilon = {\sum_{i=0}^{\kappa-1}m_i(m_i-1)}/{2}$.

This completes the proof.
\end{proof}

This theorem also establishes the following recursive formula for the determinant
$H_r\left(S_{f_0,f_1}(x)\right)$.
\begin{cor} \label{cor-mainrec}
Following the notations in Definition \ref{def-generalizedSturmsequence}.
Let $0\leq \kappa\leq s-1$ and set
$r=m_0+\cdots+m_{\kappa-1}$, with the convention $r=0$ if $\kappa=0$.
Then $H_t(S_{f_0,f_1})=0$ for $r<t<r+m_\kappa$, and
\[
H_{r+m_\kappa}(S_{f_0,f_1})=
(-1)^{\frac{m_\kappa(m_\kappa-1)}{2}}
\left(b_\kappa\prod_{i=0}^{\kappa-1}b_i^2\right)^{-m_\kappa}
H_r(S_{f_0,f_1}),
\]
where the empty product is understood to be $1$.
The initial condition is $H_0(S_{f_0,f_1})=1$.
\end{cor}

\section{Two applications}\label{Section-33}

\subsection{Algorithm for Hankel determinants}
In this section, we address the following problem:
\begin{prob}
Compute the initial $n$ Hankel determinants for a specified rational power series $h(x)$.
\end{prob}

\begin{rem}
 To compute the first $n$ Hankel determinants for any given power series $r(x)$, we can define $h(x)$ be the polynomial $r(x) \bmod x^{2n-1}$.
\end{rem}

As established in Section \ref{sec_Hfrac}, if the polynomials $f_1(x)$, $f_0(x)$, and the rational power series $h(x)$ fulfill the criteria:
\begin{align}\label{eq_f1f0condition}
m_0= \deg(f_0) - \deg(f_1) > 0  \ \ \ \text{and} \ \ \  S_{f_0,f_1}(x)= x^{m_0-1} \frac{ \Rv f_1(x)}{\Rv f_0(x)}  = h(x) \bmod x^{2n-1},
\end{align}
then the generalized Sturm sequence of  $(f_0(x), f_1(x))$ yields the Hankel continued fraction  of $h(x)$ in the sense of modulo $x^{2n-1}$.

For a nonzero polynomial \(P(x)\), let \(\operatorname{ldeg}(P)\) denote
the lowest degree of \(P(x)\).

We now present a specific selection of $f_1(x)$ and $f_0(x)$ as outlined below.
\begin{lem}\label{lem-constructf0f1}
Let
\[
h(x)=\frac{N(x)}{D(x)}\in K[[x]],
\]
where \(N(x)\neq 0\) and \(D(0)\neq 0\). Define
\[
\nu=\deg N-\deg D+1.
\]
Construct \(f_0(x)\) and \(f_1(x)\) as follows:
\[
\left\{
\begin{array}{ll}
f_0(x)=\Rv D(x),&
f_1(x)=x^{-\nu}\Rv N(x),
\quad \text{if } \nu<0,\\[4pt]
f_0(x)=x^\nu\Rv D(x),&
f_1(x)=\Rv N(x),
\quad \text{if } \nu\geq 0.
\end{array}
\right.
\]
Then \(f_0(x)\) and \(f_1(x)\) meet the requirements of condition
\eqref{eq_f1f0condition}. Moreover, for \(\nu<0\), we have
\[
\deg(f_0)=\deg D,
\qquad
\deg(f_1)=\deg D-\operatorname{ldeg}(N)-1,
\]
and for \(\nu\geq 0\), we have
\[
\deg(f_0)=\deg N+1,
\qquad
\deg(f_1)=\deg N-\operatorname{ldeg}(N).
\]
If, in addition,
\[
\gcd(N(x),D(x))=1,
\]
then
\[
\gcd(f_0(x),f_1(x))=1.
\]
\end{lem}

\begin{proof}
Since \(D(0)\neq 0\), we have
\[
\deg(\Rv D)=\deg D,
\qquad
\deg(\Rv N)=\deg N-\operatorname{ldeg}(N).
\]
Verifying that $f_0(x)$ and $f_1(x)$ fulfill condition \eqref{eq_f1f0condition} is straightforward.

It remains to prove the last assertion. Suppose that
\[
\gcd(N(x),D(x))=1.
\]
Then \(N(x)\) and \(D(x)\) have no common roots. Consequently,  $x^{\deg(N)} N(x^{-1})$ and $x^{\deg(D)} D(x^{-1})$ also have distinct roots,  implying that $\gcd(f_0(x),f_1(x))=1$. This completes the proof.
\end{proof}

Kronecker's lemma states that a formal power series is rational if and only if its Hankel determinants are ultimately
zero \cite[p.5, Kronecker Lemma]{salem1963algebraic}. The following corollary gives a short proof of the rational-implies-eventual-vanishing direction.
\begin{cor}\label{cor-Hankelbound}
Let \( h(x)=\frac{N(x)}{D(x)}\) be a rational power series, where \(N(x)\neq0\), \(D(0)\neq0\), and \(\gcd(N(x),D(x))=1\). Let
\[
d= \max(\deg(N)+1 ,\deg(D)).
\]
Then $H_{d}(h(x)) \neq 0$ and $H_{t}(h(x)) = 0$ for all $t >d$.
\end{cor}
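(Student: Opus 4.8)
The plan is to deduce Corollary~\ref{cor-Hankelbound} directly from Theorem~\ref{thm-General-Case-H} applied to the polynomials $f_0(x)$ and $f_1(x)$ produced by Lemma~\ref{lem-constructf0f1}. First I would invoke Lemma~\ref{lem-constructf0f1} to obtain $f_0,f_1$ with $S_{f_0,f_1}(x) = h(x)$ as genuine rational functions (not merely modulo a power of $x$), since $h(x)=N(x)/D(x)$ is rational; note also that property~(2) gives $\gcd(f_0,f_1)=1$. Then I would run the generalized Sturm sequence $\{f_i\}_{i=0}^{s+1}$ of the pair $(f_0,f_1)$. Because $\gcd(f_0,f_1)=1$, the sequence terminates with $f_{s+1}=0$ and $f_s$ a nonzero constant, so the degrees drop from $\deg(f_0)$ all the way to $0$; in particular $\sum_{i=0}^{s-1} m_i = \deg(f_0) - \deg(f_s) = \deg(f_0)$.

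Next I would compute $\deg(f_0)$ from part~(1) of Lemma~\ref{lem-constructf0f1}: $\deg(f_0) = \max(\deg(D),\deg(N)+1) = d$. Set $R := \sum_{i=0}^{s-1} m_i = d$. Theorem~\ref{thm-General-Case-H} says $H_r(S_{f_0,f_1}(x)) \neq 0$ exactly when $r$ is a partial sum $m_0 + \cdots + m_{\kappa-1}$ for some $\kappa \le s-1$, and $H_r = 0$ otherwise. The largest such partial sum with $\kappa \le s$ is $R = d$ (taking $\kappa = s$, i.e. all the $m_i$), and every $r > d$ exceeds all partial sums, hence $H_r = 0$ for $r > d$. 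A small point to check here: Theorem~\ref{thm-General-Case-H} as stated requires $\kappa \le s-1$, whereas I want $\kappa = s$ to reach $r = d$; I would handle this either by observing that the continued fraction~\eqref{eq_Hcf_Pf} is finite of depth $s$ (so the last convergent already gives $H_d$), or by noting that the final step $f_{s-1} = B_{s-1} f_s$ with $f_{s+1}=0$ fits the same pattern and contributes one more nonzero determinant at $r = d$. Since $h(x) = S_{f_0,f_1}(x)$ exactly, $H_d(h) = H_d(S_{f_0,f_1}) \neq 0$ and $H_t(h) = 0$ for all $t > d$.

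The main obstacle I anticipate is the boundary bookkeeping at the top of the Sturm sequence — precisely, making sure the terminal index $\kappa = s$ is legitimately covered so that $H_d(h) \neq 0$ rather than merely $H_{d'} \neq 0$ for some $d' < d$. This requires using $\gcd(f_0,f_1) = 1$ to guarantee $f_s$ is a nonzero constant (degree $0$), so that no "extra" degree is lost and the partial sums of the $m_i$ reach exactly $d$. Once that is pinned down, the nonvanishing at $d$ and vanishing beyond $d$ are immediate from the case distinction in Theorem~\ref{thm-General-Case-H}, and the explicit product formula there even gives the value of $H_d(h)$ as a ratio of powers of the leading coefficients $b_i$, though the corollary only needs its nonvanishing.
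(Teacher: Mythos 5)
Your proposal is correct and follows essentially the same route as the paper: construct $f_0,f_1$ via Lemma~\ref{lem-constructf0f1}, use $\gcd(f_0,f_1)=1$ to force $f_s$ to be a nonzero constant so that $\sum_{i=0}^{s-1} m_i=\deg(f_0)=d$, and then read off $H_d\neq 0$ and $H_t=0$ for $t>d$ from Theorem~\ref{thm-General-Case-H}. Your extra remark about the terminal index ($\kappa=s$ versus the stated bound $\kappa\le s-1$ in Theorem~\ref{thm-General-Case-H}) flags a boundary point the paper silently elides, and your fix via the finite depth-$s$ continued fraction \eqref{eq_Hcf_Pf} is the right way to justify it.
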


\begin{proof}
Let \( f_0(x) \) and \( f_1(x) \) be the polynomials as specified in Lemma \ref{lem-constructf0f1}, and consider \( \{f_i(x)\}_{i=0}^{s+1} \) as the generalized Sturm sequence. By Lemma \ref{lem-constructf0f1}, we have \(\deg(f_0)=d\). Since $\gcd(f_0(x),f_1(x))=1$, it is inferred that $f_s(x)$ is a nonzero constant. Applying Theorem \ref{thm-General-Case-H}, the fact that $f_s(x)$ is a nonzero constant implies
\[
\sum_{i=0}^{s-1} m_i=\deg(f_0)=d.
\]
Consequently, \({H}_{d}(h(x)) \neq 0 \) and \({H}_{r}(h(x)) = 0 \) for all \( r >d \).
\end{proof}

We summarize the preceding discussion in the following Hankel determinants algorithm. For a
polynomial \(P(x)\) and a positive integer \(L\), write
\[
P^{[L]}(x):=P(x)\bmod x^L .
\]

\begin{alg}[\texttt{CompHD}] \label{Algorithm-CHD}
$\newline$
Input: A rational power series
\[
h(x)=\frac{N(x)}{D(x)}\in K[[x]],
\qquad D(0)\neq 0,
\]
and a positive integer \(n\).

\noindent
Output: The Hankel determinant sequence
\[
H_1(h),H_2(h),\ldots,H_n(h).
\]

\begin{itemize}
  \item[\em 0.]
  Set \(L=2n-1\), and replace \(N(x)\) and \(D(x)\) by
  \[
  N^{[L]}(x)=N(x)\bmod x^L,
  \qquad
  D^{[L]}(x)=D(x)\bmod x^L.
  \]
  This truncation is valid because
\(
  h(x)\equiv \frac{N^{[L]}(x)}{D^{[L]}(x)}
  \pmod{x^L}.
\)
  In particular, if \(N^{[L]}(x)=0\), then return
\[
  H_1(h)=H_2(h)=\cdots=H_n(h)=0.
\]

  \item[\em 1.]
  Construct \(f_0\) and \(f_1\) from
  $N^{[L]}(x)$ and $D^{[L]}(x)$ as in Lemma \ref{lem-constructf0f1}.

  \item[\em 2.]
  Compute the quotients \(B_0,B_1,\ldots\) in the generalized Sturm
  sequence of \((f_0,f_1)\) until
  \[
  m_0+\cdots+m_j\geq \min(n,\deg f_0).
  \]
If the generalized Sturm sequence terminates before the cumulative degree
reaches \(n\), set all remaining Hankel determinants equal to zero.

\item[\emph{3}.]
Set \(H_0(h)=1\). Then use Corollary~\ref{cor-mainrec} to compute
\(H_i(h)\) recursively for \(1\le i\le n\).
\end{itemize}
\end{alg}

\begin{thm}
Let \( h(x) = {N(x)}/{D(x)} \) be a rational power series with
\(D(0)\neq0\). The Hankel determinant sequence
\[
\{H_i(h(x))\}_{i=1}^n
\]
can be computed in \(O(n\log^2 n)\) field operations using Algorithm
\ref{Algorithm-CHD}.
\end{thm}

\begin{proof}
Set \(L=2n-1 >0\). By Step 0 of Algorithm \ref{Algorithm-CHD}, we replace
\(N(x)\) and \(D(x)\) by
\[
N^{[L]}(x)=N(x)\bmod x^L,
\qquad
D^{[L]}(x)=D(x)\bmod x^L.
\]
Then
\[
\deg N^{[L]}<L,
\qquad
\deg D^{[L]}<L.
\]
Since \(D(0)\neq0\), we have
\[
h(x)\equiv \frac{N^{[L]}(x)}{D^{[L]}(x)}\pmod{x^L}.
\]
For \(1\le r\le n\), the determinant \(H_r(h)\) depends only on
the coefficients of \(h(x)\) up to degree \(2r-2\le 2n-2=L-1\).
Hence this truncation does not change \(H_1(h),\ldots,H_n(h)\).

By Lemma \ref{lem-constructf0f1}, the polynomials \(f_0(x)\) and
\(f_1(x)\) constructed from \(N^{[L]}(x)\) and \(D^{[L]}(x)\) satisfy
\[
h(x)=
x^{m_0-1}\frac{\Rv f_1(x)}{\Rv f_0(x)}
\pmod{x^{2n-1}}.
\]
Moreover, since \(\deg N^{[L]},\deg D^{[L]}<L\), the construction of
\(f_0(x)\) and \(f_1(x)\) requires \(O(2n-1)\)
operations.

The correctness of the remaining steps follows from Theorem
\ref{thm-General-Case-H} and Corollary \ref{cor-mainrec}. 

Regarding the complexity of Step 2, it is observed that Corollary \ref{cor-mainrec} and Theorem \ref{thm-General-Case-H} require only the degrees \(m_i\) and 
leading coefficients \(b_i\)
of the \(B_i(x)\)
rather than the full polynomials $f_i(x)$ for $i$ at most $n$. 
This is where the half-GCD algorithm comes into play, allowing the computation of $B_i$ in $O(n \log^2 n)$ field operations, as detailed in \cite[Section 11]{ModernComputerA}.

For the complexity of Step 3, within a single recursion, we initially require approximately two multiplications to construct
$\left(b_{\kappa+1} \prod_{i=0}^{\kappa} b_i^2 \right)$ from $ \left(b_{\kappa} \prod_{i=0}^{\kappa-1} b_i^2 \right)$. Additionally, we need approximately $\log (m_\kappa)$ steps to compute the $m_\kappa$-th power of $ \left(b_{\kappa} \prod_{i=0}^{\kappa-1} b_i^2 \right)$. Over the entire recursion, the total number of operations in the field is bounded by
$$\sum_{\kappa=0}^{s-1} 2+\log (m_\kappa) \leq 2s+ s \log (n/s) \leq 2s+ \frac{n}{e\ln(2)} \in O(n),$$
which confirms the claimed complexity.

\end{proof}

\begin{exa}
Given a rational power series $$h(x) = \frac{x^{2} \left(21 x^{14} + 15 x^{10}+x^{5}+6 x +4\right)}{x^{2}+3 x +1}.$$
We compute the Hankel determinants sequence $H_r(h(x))$ for $r=1\ldots 7$. The \emph{\texttt{CompHD}} proceeds as follows.
\end{exa}

\begin{proof}[Solution]

Step 0: We set
$
L=2\cdot 7-1=13.
$
The numerator and denominator obtained from the truncation of \(h(x)\) to order \(L\) are
\[
N^{[L]}(x)=x^{2}(15 x^{10}+x^{5}+6 x +4),
\qquad
D^{[L]}(x)=x^{2}+3x+1.
\]

Step 1: By Lemma \ref{lem-constructf0f1}, we construct
\[
f_0(x)=x^{11}(x^2+3x+1), \qquad
f_1(x)=4x^{10}+6x^9+x^5+15.
\]

Step 2: To compute the Hankel determinants up to $H_7(h(x))$, we compute
$B_0(x),B_1(x),\ldots$ successively until
\[
\sum_{i=0}^{t}m_i\geq 7,
\]
where $m_i=\deg(B_i)$. In this example, this requires
$B_0(x),\ldots,B_4(x)$. A direct computation gives
\begin{align*}
  B_0(x) &= \frac{1}{4}x^{3}+\frac{3}{8}x^{2}-\frac{5}{16}x+\frac{15}{32},
  \qquad
  B_1(x) = \frac{64}{45}x+\frac{4064}{2025},\\
  B_2(x) &= \frac{91125}{33536}x-\frac{2492775}{4393216},
  \qquad
  B_3(x) = -\frac{143877824}{61509375}x-\frac{254394664}{184528125},\\
  B_4(x) &= \frac{2767921875}{207033444463}x-\frac{2984742421875}{61281899561048}.
\end{align*}
Thus
\[
m_0=3,\qquad m_1=m_2=m_3=m_4=1,
\]
and the cumulative degrees are
\[
3,\ 4,\ 5,\ 6,\ 7.
\]

Step 3: Let $b_i=\operatorname{lc}(B_i)$. Using Corollary \ref{cor-mainrec} with the initial convention
\[
H_0(h(x))=1.
\]

Since $m_0=3$, we have
we first obtain
\[
H_1(h(x))=H_2(h(x))=0,
\]
and
\[
H_3(h(x))
=
(-1)^{\frac{3(3-1)}{2}}b_0^{-3}H_0(h(x))
=
-\left(\frac{1}{4}\right)^{-3}
=
-64.
\]
Since $m_1=1$, we get
\[
H_4(h(x))
=
\left(b_1b_0^2\right)^{-1}H_3(h(x))
=
\left(\frac{64}{45}\left(\frac{1}{4}\right)^2\right)^{-1}(-64)
=
-720.
\]
Similarly, using $B_2(x)$, $B_3(x)$, and $B_4(x)$ successively, we obtain
\begin{align*}
H_5(h(x))
&=
\left(b_2b_0^2b_1^2\right)^{-1}H_4(h(x))
=
-2096,\\
H_6(h(x))
&=
\left(b_3b_0^2b_1^2b_2^2\right)^{-1}H_5(h(x))
=
960,\\
H_7(h(x))
&=
\left(b_4b_0^2b_1^2b_2^2b_3^2\right)^{-1}H_6(h(x))
=
14060.
\end{align*}
\end{proof}

\subsection{The signature of Hankel matrix and the sign variation of generalized Sturm sequence}

Consider a nonsingular symmetric matrix \( A \in \mathbb{R}^{n \times n} \) with  all nonzero sequential principal minors 
\[ \Delta_0 = 1, \Delta_1, \ldots, \Delta_n. \]
 Define \( \epsilon_v \) as the number of  sign variations
  and \( \epsilon_p \) as the number of  sign permanences in $\{\Delta_i\}_{i=0}^{n}$. 
It is known that the signature \(\epsilon_A\) of \(A\) is
\(\epsilon_p-\epsilon_v\)
. But this formula becomes inapplicable when some sequential principal minors are zero.
However, Frobenius demonstrated that a signature formula exists for Hankel matrices in the general case.

\begin{lem}{\em\cite[Section 10]{gantmakher2000theory}} 
\label{lem-sigHankel}
For nonsingular Hankel matrix $H \in \R^{n \times n}$ with sequential principal minors
$$H_0=1,H_1,\ldots,H_n.$$
Suppose $H_{k} \neq 0$ only when $ k =r_0, \ldots ,r_s$,  and define
$$\epsilon_{r_{i-1}, r_{i}} = \begin{cases} 
(-1)^{\frac{r_{i}-r_{i-1}-1}{2} }  \ \emph{sign} \left( \frac{ H_{r_{i}}}{H_{r_{i-1}}} \right) & \text{if } r_{i}-r_{i-1} \text{ is odd;}\\
 0 & \text{otherwise.}  
\end{cases}$$
Then the signature  $\epsilon_H=\sum_{i=1}^{s} \epsilon_{r_{i-1}, r_{i}}$.
\end{lem}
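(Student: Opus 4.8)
The plan is to view $H=(h_{a+b})_{0\le a,b\le n-1}$ as the Gram matrix, in the monomial basis $1,x,\dots,x^{n-1}$, of the symmetric bilinear form $\langle p,q\rangle:=L(pq)$ on $\R[x]_{<n}$ (polynomials of degree $<n$), where $L$ is the linear functional with $L(x^\ell)=h_\ell$ for $0\le\ell\le 2n-2$; then $\epsilon_H$ is the signature of this form and is unchanged under congruence. First I would record the block pattern. Applying Lemma \ref{lem-constructf0f1} and Corollary \ref{cor-mainrec} to the generalized Sturm sequence of any rational power series agreeing with $\sum_{\ell\ge0}h_\ell x^\ell$ modulo $x^{2n-1}$ (legitimate because each $H_k$ with $k\le n$ depends only on $h_0,\dots,h_{2n-2}$), one sees that the indices $k\in\{0,\dots,n\}$ with $H_k\ne0$ are exactly $0=r_0<r_1<\dots<r_s=n$ (here $r_s=n$ since $H$ is nonsingular), that $r_i-r_{i-1}=m_{i-1}=\deg B_{i-1}$, and that $H_{r_i}/H_{r_{i-1}}=(-1)^{m_{i-1}(m_{i-1}-1)/2}\,(b_{i-1}\prod_{j=0}^{i-2}b_j^2)^{-m_{i-1}}$. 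Writing $H^{(k)}$ for the $k\times k$ leading principal submatrix, the goal reduces to proving $\operatorname{sig}H^{(r_i)}-\operatorname{sig}H^{(r_{i-1})}=\epsilon_{r_{i-1},r_i}$ for each $i$, since summing this telescoping identity from $H^{(r_0)}=H^{(0)}$ (signature $0$) to $H^{(r_s)}=H$ yields $\epsilon_H=\sum_{i=1}^{s}\epsilon_{r_{i-1},r_i}$, which is the asserted formula ($\kappa$ in the statement being $s$).

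For the per-block step, fix $i$. Since $H_{r_{i-1}}\ne0$ there is a unique monic $P_i$ of degree $r_{i-1}$ with $\langle P_i,x^a\rangle=0$ for $0\le a<r_{i-1}$. The decisive structural input is the \emph{extended orthogonality}: the vanishing of $H_{r_{i-1}+1},\dots,H_{r_i-1}$ forces $\langle P_i,x^a\rangle=0$ for all $a\le r_i-2$, while $g_i:=\langle P_i,x^{r_i-1}\rangle\ne0$. Granting this, the family $\{x^a:0\le a<r_{i-1}\}\cup\{x^cP_i:0\le c<m_{i-1}\}$ is a basis of $\R[x]_{<r_i}$ related to $1,x,\dots,x^{r_i-1}$ by a unipotent triangular change of basis (each $x^cP_i$ is monic of degree $r_{i-1}+c$); in this basis the Gram matrix of $\langle\cdot,\cdot\rangle|_{\R[x]_{<r_i}}$ is block diagonal, because $\langle x^a,x^cP_i\rangle=\langle P_i,x^{a+c}\rangle=0$ for $a+c\le r_i-2$. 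Hence $H^{(r_i)}$ is congruent to $H^{(r_{i-1})}\oplus G_i$ with $G_i:=\big(\langle P_i,x^{c+d}P_i\rangle\big)_{0\le c,d<m_{i-1}}$, so $\operatorname{sig}H^{(r_i)}-\operatorname{sig}H^{(r_{i-1})}=\operatorname{sig}G_i$ and $\det G_i=H_{r_i}/H_{r_{i-1}}$.

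It remains to compute $\operatorname{sig}G_i$. Since $\langle P_i,x^{c+d}P_i\rangle$ depends only on $c+d$, $G_i$ is an $m\times m$ Hankel matrix ($m:=m_{i-1}$) whose $(c,d)$-entry vanishes for $c+d\le m-2$ and equals $g_i$ for $c+d=m-1$; thus $G_i=g_iJ_m+E$, where $J_m$ is the reversal matrix ($(J_m)_{ab}=1$ iff $a+b=m-1$) and $E$ is the symmetric matrix supported on $\{c+d\ge m\}$. Along the path $t\mapsto g_iJ_m+tE$ one has $g_iJ_m+tE=J_m(g_iI_m+tJ_mE)$ with $J_mE$ strictly upper triangular, hence nilpotent, so $\det(g_iJ_m+tE)=\det(J_m)\,g_i^{\,m}=(-1)^{m(m-1)/2}g_i^{\,m}\ne0$ for all $t$; since the path stays among nonsingular symmetric matrices, the numbers of positive and negative eigenvalues are constant along it, so $\operatorname{sig}G_i=\operatorname{sig}(g_iJ_m)$. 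The eigenvalues of $J_m$ are $+1$ with multiplicity $\lceil m/2\rceil$ and $-1$ with multiplicity $\lfloor m/2\rfloor$, so $\operatorname{sig}G_i=0$ when $m$ is even and $\operatorname{sig}G_i=\operatorname{sign}(g_i)$ when $m$ is odd; in the latter case, from $\det G_i=(-1)^{m(m-1)/2}g_i^{\,m}=H_{r_i}/H_{r_{i-1}}$ and $(-1)^{m(m-1)/2}=(-1)^{(m-1)/2}$ for odd $m$, we obtain $\operatorname{sign}(g_i)=(-1)^{(m-1)/2}\operatorname{sign}(H_{r_i}/H_{r_{i-1}})$. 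In both parities this equals $\epsilon_{r_{i-1},r_i}$ (recall $m=r_i-r_{i-1}$), which closes the telescoping; when all gaps have size $1$ it reduces to the classical Jacobi rule.

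The one genuinely nontrivial ingredient is the extended-orthogonality claim of the second paragraph: this is exactly where the Hankel hypothesis is indispensable, since a general real symmetric matrix with the same pattern of vanishing leading principal minors need not have a gap block of nearly balanced inertia (already false for $4\times4$ matrices). I would establish it either by the classical theory of formal orthogonal polynomials and the square-block structure of the Padé table — the form in which Lemma \ref{lem-sigHankel} appears in \cite{gantmakher2000theory} — or self-containedly from Theorem \ref{thm-General-Case-H}, whose description of which Hankel minors vanish, together with the convergents of the H-continued fraction of Lemma \ref{lem_gcf}, pins down $P_i$ and its exact order of orthogonality. Everything else is sign bookkeeping and the spectrum of the reversal matrix.
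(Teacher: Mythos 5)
The paper offers no proof of Lemma \ref{lem-sigHankel} at all: it is imported verbatim from Gantmacher (Frobenius's theorem), so there is no internal argument to compare against. Judged on its own, your proposal is the classical quadratic-form proof of that theorem, and every step you actually carry out is correct: viewing $H$ as the Gram matrix of $\langle p,q\rangle=L(pq)$ on $\R[x]_{<n}$, passing for each gap to the unipotent basis $\{x^a\}_{a<r_{i-1}}\cup\{x^cP_i\}_{c<m}$ so that $H^{(r_i)}$ is congruent to $H^{(r_{i-1})}\oplus G_i$, observing that $G_i$ is Hankel, zero above the anti-diagonal with $g_i$ on it, and computing its inertia by the homotopy $g_iJ_m+tE$ (nonsingular throughout because $\det=(-1)^{m(m-1)/2}g_i^{\,m}$, using that $J_mE$ is strictly upper triangular) is exactly Frobenius's route; the sign bookkeeping, including $\det G_i=H_{r_i}/H_{r_{i-1}}$, $\det J_m=(-1)^{m(m-1)/2}$, and $(-1)^{m(m-1)/2}=(-1)^{(m-1)/2}$ for odd $m$, checks out, as do $r_0=0$, $r_s=n$ and the telescoping (and yes, the statement's $\kappa$ should be $s$).

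The one place you defer rather than prove is the extended-orthogonality claim, and since it is the single Hankel-specific ingredient you should close it rather than outsource it to the Pad\'e-table literature or route it through Theorem \ref{thm-General-Case-H}. It closes in a few lines by the same anti-triangular device you use later: set $r=r_{i-1}$, $g_j:=\langle P_i,x^{r-1+j}\rangle$, and in $\mathcal{H}_{r+j}$ replace rows $r,\dots,r+j-1$ by the rows $\bigl(\langle x^cP_i,x^b\rangle\bigr)_b$ for $c=0,\dots,j-1$ (a unit-triangular row operation, so the determinant is unchanged); if $g_1=\cdots=g_{j-1}=0$, the resulting bottom-right $j\times j$ block is anti-triangular with $g_j$ on the anti-diagonal, giving $H_{r+j}=(-1)^{j(j-1)/2}g_j^{\,j}H_r$. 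The hypotheses $H_{r+1}=\cdots=H_{r+m-1}=0$ then force $g_1=\cdots=g_{m-1}=0$ inductively, and $H_{r+m}\neq0$ gives $g_m\neq0$; as a by-product this identity re-derives $\det G_i=H_{r_i}/H_{r_{i-1}}$. Two cosmetic remarks: the opening paragraph's appeal to Lemma \ref{lem-constructf0f1} and Corollary \ref{cor-mainrec} is unnecessary (the indices $r_i$ are part of the hypothesis and the per-block argument never uses the Sturm-sequence data), and dropping it keeps the lemma's proof independent of the machinery that the paper later combines it with in Theorem \ref{thm-signature}, which is cleaner even though there is no actual circularity.
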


For \(0\le t\le s-1\) and \(c\in\mathbb R\), let
\(\mathcal V_t(c)\) denote the number of sign variations, ignoring zeros,
in the finite sequence
\[
(f_0(c),f_1(c),\ldots,f_{t+1}(c)).
\]
We define
\[
\mathcal V_t(+\infty)=\lim_{c\to+\infty}\mathcal V_t(c),
\qquad
\mathcal V_t(-\infty)=\lim_{c\to-\infty}\mathcal V_t(c).
\]

By Lemma \ref{lem-sigHankel} and Corollary \ref{cor-mainrec}, we obtain the following theorem:

\begin{thm}\label{thm-signature} 
Let \(h(x)\in \mathbb R[[x]]\), and let \(f_0(x),f_1(x)\in \mathbb R[x]\)
satisfy condition \eqref{eq_f1f0condition}. Let
\[
f_i(x)=B_i(x)f_{i+1}(x)-f_{i+2}(x),\quad
m_i=\deg B_i,\quad b_i=\operatorname{lc}(B_i),\ (\text{the leading coefficient of } B_i(x).)
\]
be the associated generalized Sturm sequence data as in Definition
\ref{def-generalizedSturmsequence}. Suppose that the Hankel matrix
\(\mathcal H_n(h(x))\) is nonsingular. Then there exists
\(0\le t\le s-1\) such that
\[
\sum_{i=0}^{t}m_i=n.
\]
Moreover,
\[
\epsilon_{\mathcal H_n(h)}
=
\mathcal V_t(-\infty)-\mathcal V_t(+\infty)
=
\sum_{\substack{0\le i\le t\\ m_i\ \mathrm{odd}}}
\operatorname{sign}(b_i).
\]
\end{thm}

\begin{proof}
By Theorem \ref{thm-General-Case-H}, since $H_n(h(x)) \neq 0$, there exists $0 \leq t \leq s-1$ such that $\sum_{i=0}^{t} m_i = n$.
Set
\[
r_i=m_0+\cdots+m_{i-1},\qquad r_0=0.
\]
Then Corollary~\ref{cor-mainrec} gives
\[
\operatorname{sign}\left(
\frac{H_{r_i+m_i}(h(x))}{H_{r_i}(h(x))}
\right)
=
(-1)^{m_i(m_i-1)/2}\operatorname{sign}(b_i^{m_i}).
\]

Applying Lemma \ref{lem-sigHankel}, we find
\[
 \epsilon_{\mathcal{H}_n(h(x))}  = \sum_{i=0,\ m_i \text{ is odd} }^{t} (-1)^{\frac{ m_i-1}{2}} \text{sign}\left( \frac{H_{r+ m_{i}}(h(x))}{H_{r}(h(x))} \right) = \sum_{i=0, m_i \text{ odd}}^{t} \text{sign}(b_{i}).
\]
On the other hand, let \(a_i x^{\deg(f_i)}\) be the leading term of
\(f_i(x)\). Since
\[
b_i x^{m_i}
=
\frac{a_i x^{\deg(f_i)}}{a_{i+1}x^{\deg(f_{i+1})}},
\]
the sign change between \(f_i(x)\) and \(f_{i+1}(x)\) as
\(x\to\pm\infty\) is determined by the sign of \(b_i x^{m_i}\).
Therefore,
\[
\mathcal V_t(+\infty)
=
\#\{0\le i\le t:\ b_i<0\},
\]
and
\[
\mathcal V_t(-\infty)
=
\#\{0\le i\le t:\ b_i<0,\ m_i\ \text{even}\}
+
\#\{0\le i\le t:\ b_i>0,\ m_i\ \text{odd}\}.
\]
Hence
\[
\mathcal V_t(-\infty)-\mathcal V_t(+\infty)
=
\sum_{\substack{0\le i\le t\\ m_i\ \mathrm{odd}}}
\operatorname{sign}(b_i).
\]
This completes the proof.
\end{proof}

For the  real roots counting problem, the Sturm's Theorem (see, for instance, \cite[Page 419]{Knuth81} or \cite{Sturm29}) states that the number of real roots of a polynomial can be characterized by the pattern of sign changes in its Sturm sequence.
\begin{thm}{\em \cite{Sturm29}}\label{Real-rooted-one}
Let $f(x)$ be a univariate polynomial with the real coefficients.  Let $a,b\in \mathbb{R}$ with $a<b$, $f(a),f(b)\neq 0$. 
If $f(x)$ has distinct roots (i.e., $\gcd(f(x), f'(x))=1$),  then the number of real roots of $f(x)$ in $(a,b]$ is $\mathcal{V}(a)-\mathcal{V}(b)$ on the Sturm sequence of $f(x)$.
\end{thm}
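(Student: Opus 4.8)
The plan is to prove Theorem~\ref{Real-rooted-one} (Sturm's Theorem) by the standard sign-variation argument, tracking how $\mathcal{V}(c)$ changes as $c$ increases from $a$ to $b$. First I would observe that $\mathcal{V}(c)$ is a piecewise-constant, integer-valued function of $c$, so it can only change as $c$ passes through a real zero of one of the polynomials $f_i(x)$ in the Sturm sequence $\{f_i(x)\}_{i=0}^{s+1}$. Since $\gcd(f(x),f'(x))=1$, running the Euclidean algorithm on $(f_0,f_1)=(f,f')$ terminates with $f_s$ a nonzero constant and no two consecutive $f_i$ share a common root; moreover each defining relation $f_i = B_i f_{i+1} - f_{i+2}$ shows that if $f_{i+1}(c)=0$ for some intermediate index $i+1$, then $f_i(c) = -f_{i+2}(c)$ and both are nonzero. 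Hence at such a $c$ the triple $(f_i(c),f_{i+1}(c),f_{i+2}(c))$ contributes exactly one sign change on each side of $c$ regardless of how we choose the sign of the (vanishing) middle term, so an interior zero of an $f_{i+1}$ with $1 \le i+1 \le s-1$ causes \emph{no} net change in $\mathcal{V}$.

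The remaining case is a zero $c$ of $f_0 = f$ itself (recall $f(a),f(b)\neq 0$, so $c$ lies strictly inside). Here I would analyze the sign of the pair $(f(x),f'(x))$ near $c$: just below $c$, $f$ and $f'$ have opposite signs if $f$ is increasing through $c$ and... more carefully, near a simple root $c$ of $f$ one has $f(x) \approx f'(c)(x-c)$, so $f(x)/f'(x) \to 0^{-}$ as $x \to c^{-}$ and $\to 0^{+}$ as $x \to c^{+}$; thus the pair $(f(c^-),f'(c^-))$ has a sign change and $(f(c^+),f'(c^+))$ does not. (Since $\gcd(f,f')=1$, all roots of $f$ are simple, so this local picture is always valid.) Consequently $\mathcal{V}$ decreases by exactly $1$ as $c$ passes a root of $f$, and is unchanged otherwise. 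Summing over the half-open interval $(a,b]$ gives $\mathcal{V}(a) - \mathcal{V}(b) = \#\{\text{real roots of } f \text{ in } (a,b]\}$; the half-open convention is what makes the endpoint bookkeeping consistent, and the hypotheses $f(a),f(b)\neq 0$ ensure $\mathcal{V}(a)$ and $\mathcal{V}(b)$ are well defined.

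The main obstacle is the careful local sign analysis at a root of $f$ and the verification that interior vanishings of the higher $f_i$ genuinely cancel; both rely crucially on $\gcd(f,f')=1$, which simultaneously guarantees simple roots of $f$ and the non-degeneracy (no two consecutive terms vanishing together, $f_s$ a nonzero constant) of the Sturm sequence. A minor technical point worth spelling out is continuity/limit behavior near $\pm\infty$ is not needed here since $a,b$ are finite, but one should still note that $\mathcal{V}(c)$ is locally constant away from the finitely many real zeros of $f_0,\dots,f_s$, which justifies summing local contributions. Everything else is routine.
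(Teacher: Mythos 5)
The paper does not prove this statement at all: Theorem~\ref{Real-rooted-one} is quoted as a classical result (Sturm's theorem), with citations to \cite{Sturm29} and \cite[Page 419]{Knuth81}, and is then used as a black box in the proof of Theorem~\ref{Sign=Numb}. So there is no internal proof to compare against; what you have written is the standard sign-variation proof from the literature, and it is essentially correct. You correctly isolate the two mechanisms: (i) at a zero $c$ of an intermediate $f_{i+1}$, the relation $f_i=B_if_{i+1}-f_{i+2}$ together with the fact that consecutive terms cannot vanish simultaneously (a consequence of $\gcd(f,f')=1$, which forces $f_s$ to be a nonzero constant) gives $f_i(c)=-f_{i+2}(c)\neq 0$, so the local count of sign changes is one on both sides of $c$ and $\mathcal{V}$ is unaffected; (ii) at a simple root $c$ of $f_0=f$, the pair $(f,f')$ loses exactly one sign change as $x$ crosses $c$, so $\mathcal{V}$ drops by one. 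Together with local constancy of $\mathcal{V}$ away from the finitely many zeros of the $f_i$, this yields $\mathcal{V}(a)-\mathcal{V}(b)=\#\{\text{roots in }(a,b)\}$. Two cosmetic remarks: since the hypotheses include $f(a),f(b)\neq 0$, the half-open interval $(a,b]$ coincides with $(a,b)$, so your comment about endpoint bookkeeping is moot rather than load-bearing; and it is worth stating explicitly that simultaneous vanishing of several non-consecutive $f_i$ at the same $c$ is harmless because the contributions of the disjoint triples (and of the initial pair) are analyzed independently. Neither point affects correctness.
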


On the other hand, as early as 1857, the quadratic forms associated with Hankel matrices 
provided a means for solving real root counting problems and real root localization problems; see \cite{parrilo2019algebraic} and \cite{krein1981method}.

Consider a polynomial $f(x) =a_n x^n + a_{n-1}x^{n-1} + \cdots + a_1x + a_0$, 
where the coefficients $a_i \in \R$  and $a_n =1$. By the fundamental theorem of algebra, $f(x)$ has exactly $n$ roots, denoted by $\{s_i\}_{i=1}^n$, in the complex field $\mathbb{C}$.
Let $p_k = \sum_{i=1}^n s_i^k$ represent the power sums of the roots and $P_{f}(x) = \sum_{k=0}^{2n-2} p_k x^k$.

The relationship between $f(x)$ and $P_f(x)$ has been established as follows:

\begin{prop}\label{prop-Pf}
Let
\[
f(x)=\prod_{i=1}^{n}(x-s_i),
\qquad
p_k=\sum_{i=1}^{n}s_i^k,
\]
and define
\[
P_f(x)=\sum_{k=0}^{2n-2}p_kx^k.
\]
Let \(f_0(x)=f(x)\) and \(f_1(x)=f'(x)\). Then
\[
\frac{\Rv f_1(x)}{\Rv f_0(x)}
\equiv
P_f(x)
\pmod{x^{2n-1}}.
\]
\end{prop}

\begin{proof}
Since
\[
\Rv f_0(x)=\prod_{i=1}^{n}(1-s_ix),
\]
we get 
\begin{align*}
\ln \frac{1}{\Rv f_0(x)}= \sum_{i=1}^n\ln \frac{1}{1-s_ix}=\sum_{i=1}^n\sum_{k\geq 1}\frac{1}{k}(s_ix)^k
=\sum_{k\geq 1}\frac{1}{k}p_kx^k.
\end{align*}
By taking the $x \frac{\mathrm{d}}{\mathrm{d} x} $ on both sides of the equation, we obtain 
\[
-\frac{x(\Rv f_0(x))'}{\Rv f_0(x)}
=
\sum_{i=1}^{n}\frac{s_ix}{1-s_ix}
=
\sum_{k\geq1}p_kx^k.
\]
On the other hand, since \(f_1(x)=f_0'(x)\), we have
\[
x(\Rv f_0(x))'
=
n\Rv f_0(x)-\Rv f_1(x).
\]
Hence
\[
\frac{\Rv f_1(x)}{\Rv f_0(x)}
=
n-\frac{x(\Rv f_0(x))'}{\Rv f_0(x)}
=
n+\sum_{k\geq1}p_kx^k.
\]
Since \(p_0=n\), it follows that
\[
\frac{\Rv f_1(x)}{\Rv f_0(x)}
=
\sum_{k\geq0}p_kx^k.
\]
Taking the truncation modulo \(x^{2n-1}\), we obtain
\[
\frac{\Rv f_1(x)}{\Rv f_0(x)}
\equiv
\sum_{k=0}^{2n-2}p_kx^k
=
P_f(x)
\pmod{x^{2n-1}}.
\]
Then the proposition follows.
\end{proof}

The first application of the law of inertia of quadratic forms to the investigation of the roots of algebraic equations is found in 
\cite{borchardt1847developpements}, where 
Borchardt deduced the following theorem postulated that all sequential principal minors of $\mathcal{H}_n(P_f)$ are nonsingular. 
Here, we give a short proof of this theorem without the necessity of Borchardt's assumption.
\begin{thm}{\em \cite{parrilo2019algebraic}}\label{Sign=Numb} 
If $f(x)$ has distinct roots (i.e., $\gcd(f(x), f'(x))=1$),
The signature of $\mathcal{H}_n(P_f(x))$ is equivalent to the number of real roots of $f(x)$. 
\end{thm}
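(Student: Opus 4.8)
The plan is to identify $\mathcal{H}_n(P_f(x))$ with the Hankel matrix of the power series $S_{f_0,f_1}(x)$ attached to the pair $(f_0,f_1)=(f(x),f'(x))$, and then to chain together Theorem~\ref{thm-signature} and classical Sturm theory (Theorem~\ref{Real-rooted-one}). The whole point is that, for this particular choice of $f_0,f_1$, the continued‑fraction/signature dictionary of Section~\ref{sec_Hfrac}, Proposition~\ref{Basic-H-g}, and Sturm's theorem all line up, so essentially no new work is needed.

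First I would take $f_0(x)=f(x)$ and $f_1(x)=f'(x)$. Since $f$ is monic of degree $n$ we have $\deg f_0=n$, $\deg f_1=n-1$, hence $m_0=\deg f_0-\deg f_1=1$ and the prefactor $x^{m_0-1}$ equals $1$. Proposition~\ref{Basic-H-g} then gives, as a power series,
\[
S_{f_0,f_1}(x)=x^{m_0-1}\,\frac{\Rv f_1(x)}{\Rv f_0(x)}=\frac{\Rv f_1(x)}{\Rv f_0(x)}=\sum_{k\ge 0}p_k x^k ,
\]
whose coefficients of $x^0,\dots,x^{2n-2}$ are exactly those of $P_f(x)$; thus $S_{f_0,f_1}(x)\equiv P_f(x)\pmod{x^{2n-1}}$ and $\mathcal{H}_n(P_f)=\mathcal{H}_n(S_{f_0,f_1})$. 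So the triple $h=P_f$, $f_0=f$, $f_1=f'$ satisfies condition~\eqref{eq_f1f0condition}, and by Definition~\ref{def-generalizedSturmsequence} the generalized Sturm sequence of $(f_0,f_1)$ is precisely the ordinary Sturm sequence of $f$. Moreover $\mathcal{H}_n(P_f)$ is nonsingular: since $f$ has distinct roots, $\gcd(f_0,f_1)=\gcd(f,f')=1$, so the last nonzero term $f_s$ of the Sturm sequence is a nonzero constant, and the computation in the proof of Corollary~\ref{cor-Hankelbound} gives $\sum_{i=0}^{s-1}m_i=\deg f_0=n$, whence $H_n(S_{f_0,f_1})=H_n(P_f)\neq 0$ by Theorem~\ref{thm-General-Case-H}. (This is exactly the place where Borchardt's nondegeneracy hypothesis is eliminated.)

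Now Theorem~\ref{thm-signature}, applied with $h=P_f$, yields
\[
\epsilon_{\mathcal{H}_n(P_f)}=\mathcal{V}(-\infty)-\mathcal{V}(+\infty),
\]
where $\mathcal{V}$ counts sign variations along the Sturm sequence of $f$. Finally, Sturm's Theorem (Theorem~\ref{Real-rooted-one}) states that for real $a<b$ with $f(a)f(b)\neq 0$ the number of real roots of $f$ in $(a,b]$ is $\mathcal{V}(a)-\mathcal{V}(b)$; letting $a\to-\infty$ and $b\to+\infty$ (legitimate since $f$ has finitely many roots, so $f$ does not vanish for $|a|,|b|$ large, where $\mathcal{V}(a)=\mathcal{V}(-\infty)$ and $\mathcal{V}(b)=\mathcal{V}(+\infty)$) shows that the total number of real roots of $f$ equals $\mathcal{V}(-\infty)-\mathcal{V}(+\infty)$. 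Combining the two displayed equalities gives $\epsilon_{\mathcal{H}_n(P_f)}=\#\{\text{real roots of }f\}$, which is the claim.

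I do not expect a genuine obstacle: every step is either a citation or a one‑line consequence of material already established. The only points requiring mild care are (i) the bookkeeping that $m_0=1$ makes $S_{f_0,f_1}$ literally agree with $P_f$ modulo $x^{2n-1}$ with no stray power of $x$; (ii) deducing nonsingularity of $\mathcal{H}_n(P_f)$ from squarefreeness of $f$ instead of assuming it; and (iii) the passage to the limits $\pm\infty$ in Sturm's theorem, which is standard but should be stated.
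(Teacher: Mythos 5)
Your proposal is correct and follows essentially the same route as the paper: identify $\mathcal{H}_n(P_f)$ with the Hankel matrix of $S_{f,f'}$ via Proposition~\ref{Basic-H-g}, obtain nonsingularity from $\gcd(f,f')=1$ as in Corollary~\ref{cor-Hankelbound}, and then combine Theorem~\ref{thm-signature} with Sturm's Theorem~\ref{Real-rooted-one}. You merely spell out details (the $m_0=1$ bookkeeping and the $\pm\infty$ limits) that the paper leaves implicit.
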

\begin{proof}
By Corollary \ref{cor-Hankelbound}, note that $\gcd(f_0(x),f_1(x))=1$ and $\max (\deg(f) , \deg(f')+1) = n$, we have $\det \mathcal{H}_n(P_f) \neq 0$.
Then the theorem follows by Theorem \ref{thm-signature} and Sturm's theorem \ref{Real-rooted-one}. 
\end{proof}

%\section{Declaration}

%\subsection{Ethical approval} Not applicable

%\subsection{Funding} This work is partially supported by the National Natural Science Foundation of China [12071311].

\noindent
{\small \textbf{Acknowledgements:} 

The authors are grateful to Raphael Clifford for his careful reading and insightful suggestions, which have greatly improved the readability of this manuscript. This work was partially supported by the National Natural Science Foundation of China [12571355, 12071311].

%\bibliographystyle{plain}
%\bibliography{Hankel.bib} \begin{thebibliography}{10}

\begin{thebibliography}{99}

\bibitem{PaulBarry}
Paul Barry.
\newblock {On the Hankel transform of C-fractions}.
\newblock {\em arXiv preprint. arXiv:1212.3490v2}, 2012.

\bibitem{borchardt1847developpements}
Carl~Wilhelm Borchardt.
\newblock D{\'e}veloppements sur l'{\'e}quation {\`a} l'aide de laquelle on
  d{\'e}termine les in{\'e}galit{\'e}s s{\'e}culaires du mouvement des
  plan{\`e}tes.
\newblock {\em Journal de math{\'e}matiques pures et appliqu{\'e}es},
  12:50--67, 1847.

\bibitem{chien2022hankel}
Hsu-Lin Chien, Sen-Peng Eu, and Tung-Shan Fu.
\newblock {On Hankel determinants for Dyck paths with peaks avoiding multiple
  classes of heights}.
\newblock {\em European Journal of Combinatorics}, 101:103478, 2022.

\bibitem{Cigler-C13}
Johann Cigler.
\newblock {A special class of Hankel determinants}.
\newblock {\em arXiv preprint. arXiv:1302.4235}, 2013.

\bibitem{cigler2011some}
Johann Cigler and Christian Krattenthaler.
\newblock Some determinants of path generating functions.
\newblock {\em Advances in Applied Mathematics}, 46(1-4):144--174, 2011.

\bibitem{dietrich1996nlog2n}
C-R Dietrich and M-R Osborne.
\newblock {O}$(nlog^2 n)$ determinant computation of a {Toeplitz matrix} and
  fast variance estimation.
\newblock {\em Applied Mathematics Letters}, 9(2):29--31, 1996.

\bibitem{FlajoletDM}
Philippe Flajolet.
\newblock Combinatorial aspects of continued fractions.
\newblock {\em Discrete Mathematics}, 32:125--161, 1980.

\bibitem{gantmakher2000theory}
Feliks~Ruvimovich Gantmakher.
\newblock {\em The theory of matrices}, volume 131.
\newblock American Mathematical Soc., 2000.

\bibitem{gutknecht2011qd}
Martin~H Gutknecht and Beresford~N Parlett.
\newblock {From qd to LR, or, how were the qd and LR algorithms discovered?}
\newblock {\em IMA journal of numerical analysis}, 31(3):741--754, 2011.

\bibitem{HanGuoNiu}
Guo-Niu Han.
\newblock {Hankel continued fraction and its applications}.
\newblock {\em Advances in Mathematics}, 303:295--321, 2016.

\bibitem{HeinigRost}
Georg Heinig and Karla Rost.
\newblock {Fast algorithms for Toeplitz and Hankel matrices}.
\newblock {\em Linear Algebra and its Applications}, 435(1):1--59, 2011.

\bibitem{householder1970numerical}
Alston~Scott Householder.
\newblock The numerical treatment of a single nonlinear equation.
\newblock {\em (No Title)}, 1970.

\bibitem{Knuth81}
Donald~Ervin Knuth.
\newblock {\em The art of computer programming}, volume~3.
\newblock Pearson Education, 1997.

\bibitem{Krattenthaler05}
Christian Krattenthaler.
\newblock Advanced determinant calculus: a complement.
\newblock {\em Linear Algebra and its Applications}, 411:68--166, 2005.

\bibitem{kravanja2000coupled}
Peter Kravanja and Marc Van~Barel.
\newblock {Coupled Vandermonde matrices and the superfast computation of
  Toeplitz determinants}.
\newblock {\em Numerical Algorithms}, 24(1):99--116, 2000.

\bibitem{BK2000}
Peter Kravanja and Marc Van~Barel.
\newblock On the generically superfast computation of {Hankel} determinants.
\newblock {\em Large-Scale Scientific Computations of Engineering and
  Environmental Problems II}, 73:57--64, 2000.

\bibitem{krein1981method}
Mark~G Krein and Mark~A Naimark.
\newblock The method of symmetric and hermitian forms in the theory of the
  separation of the roots of algebraic equations.
\newblock {\em Linear and multilinear algebra}, 10(4):265--308, 1981.

%\bibitem{Maple}
%Maple.
%\newblock Mathematics software.
%\newblock {\em https://cn.maplesoft.com/products/maple/index.aspx.}, 2024.

\bibitem{parrilo2019algebraic}
Pablo~A Parrilo.
\newblock Algebraic techniques and semidefinite optimization, lecture 5, 2019.

\bibitem{salem1963algebraic}
Rapha{\"e}l Salem and Lennart Carleson.
\newblock {\em {Algebraic numbers and Fourier analysis}}.
\newblock Heath Boston, 1963.

\bibitem{Sendra93}
Juan~R Sendra and Juan Llovet.
\newblock {Efficient algorithms for Hankel matrices over $Z[x_1,\ldots,x_r]$}.
\newblock In {\em Proceedings of the 1993 international symposium on Symbolic
  and algebraic computation}, pages 201--208, 1993.

\bibitem{sokal2023simple}
Alan~D Sokal.
\newblock A simple algorithm for expanding a power series as a continued
  fraction.
\newblock {\em Expositiones Mathematicae}, 41(2):245--287, 2023.

\bibitem{Sturm29}
Par~C Sturm.
\newblock {\em M{\'e}moire sur la r{\'e}solution des {\'e}quations
  num{\'e}riques}.
\newblock Springer, 2009.

\bibitem{Sulanke-Xin}
Robert~A Sulanke and Guoce Xin.
\newblock Hankel determinants for some common lattice paths.
\newblock {\em Advances in Applied Mathematics}, 40(2):149--167, 2008.

\bibitem{ModernComputerA}
Joachim Von Zur~Gathen and J{\"u}rgen Gerhard.
\newblock {\em Modern computer algebra, Third Edition}.
\newblock Cambridge university press, 2003.

\end{thebibliography}

\end{document}